\documentclass{article}
\title{An improved explicit bound on $|\zeta(\frac{1}{2} + it)|$}
\author{D.J. Platt\\ Heilbronn Institute for Mathematical Research \\ University of Bristol, Bristol, UK\\ dave.platt@bris.ac.uk\\ \\
and\\ \\
T.S. Trudgian\footnote{Supported by Australian Research Council DECRA Grant DE120100173.}\\
Mathematical Sciences Institute\\ The Australian National University,
 ACT 0200, Australia\\ timothy.trudgian@anu.edu.au
}
\usepackage{url}
\usepackage{amsthm}
\usepackage{amsmath}
\usepackage{comment}
\usepackage{amssymb}
\usepackage{booktabs}
\newtheorem{thm}{Theorem}
\newtheorem{Lem}{Lemma}
\newtheorem{cor}{Corollary}

\begin{document}

\maketitle
\begin{abstract}
\noindent
This article proves the bound $|\zeta(\frac{1}{2} + it)|\leq 0.732 t^{\frac{1}{6}} \log t$ for $t \geq 2$, which improves on a result by Cheng and Graham. We also show that $|\zeta(\frac{1}{2}+it)|\leq 0.732 |4.678+it|^{\frac{1}{6}} \log |4.678+it|$ for all $t$.
\end{abstract}

\section{Introduction}
The Riemann zeta-function $\zeta(s)$ is known \cite{Huxley} to satisfy $\zeta(\frac{1}{2} + it)\ll_{\epsilon} t^{\frac{32}{205} + \epsilon}$ for all $t\gg 1$ and for every $\epsilon>0$. Explicit estimates of the sort
\begin{equation*}\label{exp}
|\zeta(\tfrac{1}{2} + it)| \leq k_{1} t^{k_{2}} (\log t)^{k_{3}}, \quad (t\geq t_{0})
\end{equation*}
are difficult to produce since, attempts at small values of $k_{2}$ lead to complicated arguments in the calculation of $k_{1}$.
Using the approximate functional equation and the Riemann--Siegel formula one may show that
\begin{equation}\label{lehm}
|\zeta(\tfrac{1}{2} + it)|\leq \frac{4}{(2\pi)^{\frac{1}{4}}} t^{\frac{1}{4}}, \quad (t\geq 0.2).
\end{equation}
Lehman \cite[Lem.\ 2]{Lehman} proved this for $t\geq 128\pi$ --- see also \cite[Thm 2]{Titchmarsh3} and \cite[Thm 1]{Turing} --- one may verify that (\ref{lehm}) holds in the range $0.2\leq t < 128\pi$ by direct computation.
The only other result of which we are aware is due to Cheng and Graham \cite{Cheng}, viz.
\begin{equation}\label{CG}
|\zeta(\tfrac{1}{2} + it)|\leq 3 t^{\frac{1}{6}}\log t, \quad (t\geq e).
\end{equation}
The upper bound in (\ref{CG}) is smaller than that in (\ref{lehm}) when $t\geq 1.4\times 10^{21}$. This is unfortunate since for some problems one seeks information for $t\geq T_{0}$, where $T_{0}$ is at most the height to which the Riemann hypothesis has been verified. The first author \cite{Plattarxiv} has confirmed that for $0\leq t \leq 3.06 \times 10^{10}$ all non-trivial zeroes of $\zeta(\sigma + it)$ lie on the critical line.

In \cite[(5.4)]{TrudgianS2} the second author showed that one could combine Theorem 3 of \cite{Cheng} with (\ref{lehm}) to show that
\begin{equation*}\label{CGT}
|\zeta(\tfrac{1}{2} + it)|\leq 2.38 t^{\frac{1}{6}}\log t, \quad (t\geq e),
\end{equation*}
which is better than the bound in (\ref{lehm}) only when $t\geq 10^{19}$.
The purpose of this article is to revisit the paper by Cheng and Graham and to prove

\begin{thm}\label{t1}
$$|\zeta(\tfrac{1}{2} + it)|\leq 0.732 t^{\frac{1}{6}} \log t, \quad (t\geq 2).$$
\end{thm}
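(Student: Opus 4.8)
The plan is to follow the Cheng–Graham strategy while sharpening each estimate with the aid of the numerical verification of the Riemann hypothesis up to height $3.06\times 10^{10}$. For $2\le t\le 3.06\times 10^{10}$ one may hope to verify the bound directly: combining the fact that all zeros up to this height lie on the critical line with the Riemann--Siegel formula, or simply with a rigorous interval-arithmetic evaluation of $\zeta(\tfrac12+it)$ on a fine mesh (controlling the derivative to bridge between mesh points), should show $|\zeta(\tfrac12+it)|$ is comfortably below $0.732\,t^{1/6}\log t$ in this range --- indeed near $t=2$ the right-hand side is already larger than $\zeta(\tfrac12)$ in absolute value, and the bound only gets easier as $t$ grows. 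So the substantive work is the asymptotic range $t\ge 3.06\times 10^{10}$.

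For large $t$ I would start from an explicit van der Corput / Weyl-type bound for the exponential sum arising in the approximate functional equation. Writing $\zeta(\tfrac12+it)$ as a sum $\sum_{n\le N} n^{-1/2-it}$ plus a remainder with $N\asymp \sqrt{t/(2\pi)}$, one splits the sum dyadically and estimates each block $\sum_{M<n\le 2M} n^{-1/2-it}$ via the second-derivative (van der Corput) estimate $\sum e(f(n))\ll (F/M)^{1/2}+ (M/F)^{1/2}$ with $F\asymp t$, or better via a third-derivative (Weyl) estimate to get the exponent $1/6$. Cheng and Graham's Theorem 3 gives exactly such an explicit exponential-sum bound; the task is to feed in the cleanest available constants and to track them honestly through the dyadic sum, the Abel summation that removes the $n^{-1/2}$ weight, and the contribution of the tail/remainder term in the approximate functional equation. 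The number of dyadic blocks is $\asymp \log t$, which is where the $\log t$ factor is born, and careful bookkeeping of the implied constant in each block (including using $t\ge 3.06\times10^{10}$ to absorb lower-order terms) is what lets one push the leading constant down to $0.732$.

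Concretely, the steps in order: (i) fix the approximate functional equation with an explicit error term valid for $t\ge 3.06\times 10^{10}$; (ii) quote the Cheng–Graham explicit $k$-th derivative exponential sum estimate (with $k=3$ or via the theory of exponent pairs) and simplify its constant; (iii) apply it to each dyadic block, use partial summation to restore the $n^{-1/2}$ weights, and sum over the $O(\log t)$ blocks, collecting a bound of the shape $C_1 t^{1/6}\log t + C_2 t^{1/6} + C_3(\text{error})$; (iv) use the lower bound $\log t \ge \log(3.06\times10^{10})$ to fold $C_2 t^{1/6}$ and the error terms into the main term, obtaining a single constant $\le 0.732$; (v) handle $2\le t\le 3.06\times10^{10}$ by the direct computation described above. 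The main obstacle is step (iii)–(iv): getting an honest, non-lossy constant out of the dyadic decomposition and partial summation, since naive estimation of each block and naive summation over $\log t$ blocks tends to produce a constant several times larger than $0.732$, and the whole improvement over Cheng–Graham's $3$ (and over the intermediate $2.38$) hinges on being economical here --- choosing the optimal split point $N$, optimising how many initial terms are summed trivially versus via the exponential-sum bound, and exploiting the largeness of $t$ to discard secondary terms.
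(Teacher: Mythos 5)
Your architecture matches the paper's in outline (explicit approximate functional equation, one round of explicit exponential-sum estimates on the sum of length $\sqrt{t/2\pi}$, computation for small $t$), but as written there is a genuine gap: the entire content of the theorem is the constant $0.732$, and your steps (ii)--(iv) never produce it. You yourself note that ``naive estimation of each block and naive summation over $\log t$ blocks tends to produce a constant several times larger than $0.732$'' and that the improvement ``hinges on being economical here'' --- but the economy is precisely what is missing. The paper's proof consists of carrying this out: a sharpened second-derivative estimate (its Lemma \ref{CLem}, with the constant $2\sqrt{2/\pi}$ and a reduced $L-1$ term), a Weyl-differencing lemma with the factor $(1-m/M)$ (Lemma \ref{L2}), a decomposition into blocks in geometric progression $X_j=A_0k^jt^{1/3}$ with ratio $k$ a free parameter (not dyadic; the optimum is $k=1.16$), a trivial treatment of $n\le A_0t^{1/3}$, and a final numerical optimisation over $k,\theta,A_0,t_0$ yielding the coefficients $D_1,\dots,D_5$ in (\ref{morse}). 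Without executing this bookkeeping, one cannot conclude that any constant $\le 0.732$ (rather than, say, $2$ or $3$) is attainable; a plan that defers exactly this step does not prove the theorem.

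The small-$t$ part of your plan also has problems. Knowing that all zeros up to height $3.06\times 10^{10}$ lie on the critical line gives no pointwise upper bound on $|\zeta(\tfrac12+it)|$, so that input is irrelevant here; and a rigorous interval-arithmetic sweep of $[2,\,3.06\times 10^{10}]$ with mesh fine enough to control $\zeta$ between nodes is an enormous computation that the paper deliberately avoids. The paper instead covers $226.71\le t\le 5.867\times 10^9$ with the explicit Riemann--Siegel convexity-type bound $|\zeta(\tfrac12+it)|\le 4(2\pi)^{-1/4}t^{1/4}$ (valid for $t\ge 0.2$), which is below $0.732\,t^{1/6}\log t$ exactly in that range, and verifies only $[2,230]$ by Euler--MacLaurin interval arithmetic; correspondingly the large-$t$ argument is tuned so that its threshold $t_0=5.867\times 10^9$ meets this crossover. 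If you insist on a crossover at $3.06\times 10^{10}$ you make both halves harder: the computation becomes vastly heavier, and you gain essentially nothing in the analytic half. So the proposal identifies the right general strategy but leaves the decisive quantitative work undone and substitutes an unnecessary (and partly inapplicable) computational step for the simple $t^{1/4}$ bridge.
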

The bound in Theorem \ref{t1} improves on that in (\ref{lehm}) whenever $t\geq 5.868\times 10^{9}$. Three applications are apparent: \cite{RamDraft, TrudgianTuring,TrudgianS2} which respectively relate to explicit estimates for zero-density theorems, bounding $\int_{0}^{T}S(t)\, dt$, and bounding $S(t)$, where $\pi S(t)$ is the argument of the zeta-function on the critical line. 
More precisely, when $t$ does not coincide with an ordinate of a zero of $\zeta(\sigma +it)$, $S(t)$ is defined as
\begin{equation*}
S(t) = \pi^{-1}\arg\zeta(\tfrac{1}{2} +it),
\end{equation*}
where the argument is determined via continuous variation along the straight lines connecting $2, 2+it$ and $\frac{1}{2}+it$, with $S(0) = 0$. If $t$ is such that $\zeta(\sigma + it) = 0$ then define $S(t)$  to be $\frac{1}{2}\lim_{\epsilon \rightarrow 0}\{S(t-\epsilon) + S(t+\epsilon)\}.$

The estimate for $S(t)$ can be improved immediately to give
\begin{cor}\label{c1}
If $T\geq e$, then
$$|S(T)| \leq 0.110\log T + 0.290\log\log T + 2.290.$$
\end{cor}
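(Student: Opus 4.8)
The plan is to run the classical Backlund-type argument for $S(T)$, in the form used in \cite{TrudgianS2}, with Theorem \ref{t1} as the new input. Write $\pi S(T) = \arg\zeta(\tfrac12+iT)$, the argument being obtained by continuous variation from $\arg\zeta(2)=0$ along the path $2\to 2+iT\to\tfrac12+iT$. Along the vertical segment $\mathrm{Re}\,\zeta(2+it) = 1 + \sum_{n\geq 2} n^{-2}\cos(t\log n) \geq 2-\zeta(2) > 0$, so $\arg\zeta$ does not leave $(-\tfrac{\pi}{2},\tfrac{\pi}{2})$ there. On the horizontal segment $\arg\zeta(\sigma+iT)$ can move from one strip $\big(m\pi-\tfrac{\pi}{2},\,m\pi+\tfrac{\pi}{2}\big)$ into an adjacent one only where $\mathrm{Re}\,\zeta(\sigma+iT)$ vanishes; letting $q$ be the number of such $\sigma\in[\tfrac12,2]$, one gets $|\arg\zeta(\tfrac12+iT)|\leq (q+\tfrac12)\pi$, that is $|S(T)|\leq q+\tfrac12$. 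So everything reduces to bounding $q$.

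For this, put $f(w) = \tfrac12\big(\zeta(w+iT)+\zeta(w-iT)\big)$, which is analytic near $w=2$ (for $T\geq e$ the poles at $w=1\pm iT$ lie at distance $>T$ from the discs we use) and satisfies $f(\sigma) = \mathrm{Re}\,\zeta(\sigma+iT)$ for real $\sigma$. Hence $q$ is at most the number of zeros of $f$ in $|w-2|\leq\tfrac32$, and Jensen's formula gives, for every $R>\tfrac32$,
$$q\,\log(2R/3) \;\leq\; \frac{1}{2\pi}\int_0^{2\pi}\log\big|f(2+Re^{i\phi})\big|\,d\phi \;-\; \log|f(2)|,\qquad \log|f(2)|\geq\log\big(2-\zeta(2)\big).$$
(For the sharpest constants one replaces the centre $2$ by a free $\sigma_0\geq 2$ and the inner radius $\tfrac32$ by $\sigma_0-\tfrac12$.) On $|w-2|=R$ write $w\pm iT = \sigma+i\tau$, so $\sigma = 2+R\cos\phi\in[2-R,2+R]$ and $|\tau| = T+O(R)$; it remains to bound $\log|\zeta(\sigma+i\tau)|$ for such $\sigma$ and for $\tau$ near $T$. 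When $\sigma\geq\tfrac12$ one interpolates, by Phragm\'en--Lindel\"of, between Theorem \ref{t1}, namely $\log|\zeta(\tfrac12+i\tau)|\leq\log 0.732 + \tfrac16\log|\tau| + \log\log|\tau|$, and the trivial bound $|\zeta(1+\delta+i\tau)|\leq\zeta(1+\delta)$; when $\sigma<\tfrac12$ one uses the functional equation $\zeta(s)=\chi(s)\zeta(1-s)$ with $|\chi(\sigma+i\tau)|\leq(|\tau|/2\pi)^{\frac12-\sigma}\big(1+O(1/|\tau|)\big)$ to pass to $\mathrm{Re}(s)=1-\sigma>\tfrac12$.

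Substituting these estimates into the Jensen inequality, the terms $\tfrac16\log|\tau|$, $\log\log|\tau|$ and the $|\chi|$-factor generate the $\log T$ and $\log\log T$ contributions, while $\log 0.732$, $-\log(2-\zeta(2))$ and the remaining $O(1)$ terms are absorbed into the constant; optimising over $\sigma_0$, $R$ (and $\delta$) yields $|S(T)|\leq 0.110\log T + 0.290\log\log T + 2.290$ for $T\geq e$. This is exactly the computation carried out in \cite{TrudgianS2}, there with the value $2.38$ (or $3$) in place of our $0.732$; so the only real task is the bookkeeping — confirming that the improved input makes the constants collapse to $0.110,\,0.290,\,2.290$ — and that is where I expect the effort to lie. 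A last check concerns $T$ close to $e$, where the asymptotic estimates are weakest: there the right-hand side is at least $2.29$, whereas $\zeta$ has no zero below height $14$ and $S(T)$ is governed by the smooth main term of the Riemann--von Mangoldt formula and is far smaller, so the bound holds there too.
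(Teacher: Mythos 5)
Your overall strategy is the right family of argument: the paper's proof of this corollary is precisely an appeal to the general $S(T)$ machinery of \cite{TrudgianS2} (its inequality (4.8)), with the new triple $(k_1,k_2,k_3)=(0.732,1/6,1)$ from Theorem \ref{t1} inserted and the free parameters there (the paper takes $Q_0=5$, $\eta=0.064$, $r=2.032$) re-optimised. So in spirit you are reconstructing the same Backlund--Jensen--convexity computation that underlies the cited formula.

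The genuine gap is that the corollary \emph{is} its numerical constants, and your proposal never produces them. You set up the counting of zeros of $f(w)=\tfrac12(\zeta(w+iT)+\zeta(w-iT))$ via Jensen and the Phragm\'en--Lindel\"of interpolation, but then state that ``optimising over $\sigma_0$, $R$ (and $\delta$) yields $0.110\log T+0.290\log\log T+2.290$'' and defer the bookkeeping --- which is exactly the content to be verified. Worse, the plain single-disc Jensen argument you sketch is coarser than the argument actually encoded in \cite[(4.8)]{TrudgianS2}: that bound is derived with additional refinements (in particular the device of working with $\tfrac12\bigl(\zeta(s+iT)^N+\zeta(s-iT)^N\bigr)$ and letting $N\to\infty$, together with carefully optimised radii, which is what the parameters $\eta$, $r$, $Q_0$ parametrise). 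Backlund-style arguments of the unrefined form you describe historically give constants of the shape $0.137\log T+0.44\log\log T+4.35$ under comparable convexity inputs, so it is not credible that your sketch, carried out as written, collapses to $0.110$, $0.290$, $2.290$; to reach those numbers you need either to import the general inequality of \cite{TrudgianS2} explicitly (and then the proof reduces, as in the paper, to a parameter choice such as $Q_0=5$, $\eta=0.064$, $r=2.032$) or to reproduce its refined derivation and actually carry the constants through. Also, as a small point, your bound $|S(T)|\le q+\tfrac12$ from the horizontal segment should be checked against the endpoint and boundary cases (the standard statement is $|S(T)|\le q+\tfrac32$ unless one argues the strip-crossing count more carefully), though this only affects the additive constant and not the main issue above.
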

\begin{proof}
Using Theorem \ref{t1} one may take $(k_{1}, k_{2}, k_{3}) = (0.732, 1/6, 1)$ in \cite[(4.8)]{TrudgianS2}. Instead of choosing $Q_{0} = 2$ on page 291 of \cite{TrudgianS2}, we choose $Q_{0} = 5.$ The choice of $\eta = 0.064, r = 2.032$ on the same page establishes Corollary \ref{c1}.
\end{proof}
This improves the constant term in Theorem 1 \cite{TrudgianS2} from 2.510 to 2.290.

The improvement of Theorem \ref{t1} over the result in \cite{Cheng} comes from two ideas. First, an explicit form of the `standard' approximate functional equation is used (cf.\ Lemma \ref{HL}), in which one needs to estimate sums of the form $\sum_{n\leq Y} n^{it}$, where $t^{\frac{1}{2}} \ll Y \ll t^{\frac{1}{2}}$. This requires only one round of applying estimates for exponential sums. Cheng and Graham considered an approximation to $\zeta(\frac{1}{2} + it)$ in which one needs to estimate a longer sum with $t \ll Y \ll t$. They require two different estimates for exponential sums to cover this range. 
Second, some minor adjustments are made to some of the results in \cite{Cheng}, and more variables are optimised.

We prove some necessary lemmas in \S\ref{sec2}. We prove Theorem \ref{t1} for large $t$ in \S\ref{sec large} and for small $t$ in \S\ref{sec small}. We conclude with some computational remarks in \S\ref{secconc}.

\subsection*{Acknowledgements} 
We are grateful to Olivier Ramar\'{e} for helpful suggestions and comments.

\section{Preparatory Lemmas}\label{sec2}
It is necessary to record some estimates for exponential sums. Versions of the following lemmas without explicit constants can be found in \cite[Thm 5.9 and Lemma 5.10]{Titchmarsh}. Slightly coarser explicit versions can be found in \cite[p.\ 36]{KKapprox} and~\cite[Lemma~2.2]{Hab}
\begin{Lem}\label{CLem}
Assume that $f(x)$ is a real-valued function with two continuous derivatives when $x\in[N+1, N+L]$. If there exist two real numbers $V<W$ with $W>1$ such that
\begin{equation*}
\frac{1}{W}\leq |f''(x)|\leq \frac{1}{V}
\end{equation*}
for $x\in[N+1, N+L]$, then
\begin{equation*}
\bigg|\sum_{n=N+1}^{N+L} e^{2\pi i f(n)}\bigg| \leq \left(\frac{L-1}{V} + 1\right)\left(2\sqrt{\frac{2}{\pi}}W^{1/2} + 2\right) +1.
\end{equation*}
\end{Lem}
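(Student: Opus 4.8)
The plan is to prove this as an explicit, constant-tracked version of van der Corput's second-derivative estimate for exponential sums (essentially Titchmarsh, Theorem 5.9), the core idea being to cut $[N+1,N+L]$ into pieces on which the first-derivative (Kusmin--Landau) test applies, and to treat the short near-stationary pieces trivially.

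I would first make two routine reductions. The hypothesis forces $f''$ to be continuous and nonvanishing on $[N+1,N+L]$, hence of one sign there; replacing $f$ by $-f$ if necessary (this conjugates the sum, leaving its modulus and the bound on $|f''|$ unchanged) I may assume $f''>0$, so that $f'$ increases strictly on $[N+1,N+L]$ with total increase $\int_{N+1}^{N+L}f''(x)\,dx\le (L-1)/V$. This last quantity is what will produce the factor $(L-1)/V$.

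Next I would decompose. Grouping $n\in\{N+1,\dots,N+L\}$ according to the position of $f'(n)$ relative to the integers --- say by the nearest integer or half-integer grid point, using monotonicity of $f'$ --- yields at most $(L-1)/V+1$ consecutive blocks (one possible boundary block being absorbed later into the trailing $+1$), on each of which $f'$ stays within $\tfrac12$ of a fixed integer $m$. Writing $g(x)=f(x)-mx$ on such a block turns the sum into $\sum e^{2\pi i g(n)}$, with $g'$ monotone, $|g'|\le\tfrac12$, and $g''=f''$ still pinched between $1/W$ and $1/V$. I would then fix a threshold $\tau\in(0,\tfrac12)$: on the sub-range where $|g'|\le\tau$, which contains only $O(\tau W)$ integers since $g'$ moves at rate at least $1/W$, I bound the partial sum trivially; and on the rest, where $\|g'\|=|g'|\ge\tau$ with $g'$ monotone, I apply the Kusmin--Landau inequality --- a partial sum there being $\le\cot(\tfrac{\pi\tau}{2})$, which is $O(1/\tau)$ --- or, equivalently, the stationary-phase bound $\bigl|\int e^{2\pi i g}\bigr|\ll(\min g'')^{-1/2}$. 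Taking $\tau$ of size $W^{-1/2}$ balances the $\tau W$ and $\tau^{-1}$ terms and yields a per-block bound $2\sqrt{2/\pi}\,W^{1/2}+O(1)$; summing over the blocks and collecting the bounded remainders into the trailing $+1$ completes the proof.

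I expect the difficulty to lie entirely in the explicit constants rather than in the architecture. Obtaining the coefficient exactly $2\sqrt{2/\pi}$ (and not merely $\ll W^{1/2}$) requires the sharp forms of the auxiliary estimates --- a Kusmin--Landau inequality with the cotangent constant, or a sharp exponential-integral bound --- together with careful counting of the blocks and of the near-stationary integers (watching floors, ceilings, and the two ends of each block) so that everything collapses precisely to $\left((L-1)/V+1\right)\!\left(2\sqrt{2/\pi}\,W^{1/2}+2\right)+1$. Degenerate configurations, such as $L=1$ or $f'$ varying by less than a unit over the whole interval, are handled directly: there the right-hand side is at least $3$ while the sum is short, so the bound is immediate.
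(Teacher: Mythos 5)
Your architecture (cut at the integers of $f'$, treat the near-stationary zones trivially, use Kusmin--Landau on the rest, balance the threshold at $\tau\asymp W^{-1/2}$) is the classical van der Corput scheme, but it cannot produce the constant $2\sqrt{2/\pi}$ that the lemma asserts, and this is exactly where your sketch breaks. Do the accounting you propose: within one unit interval of the range of $f'$, the zone where $\|f'\|\le\tau$ contains at most $2\tau W+1$ integers (since $|f''|\ge 1/W$), and each maximal stretch with $\|f'\|\ge\tau$ costs $\cot(\pi\tau/2)\approx 2/(\pi\tau)$ by Kusmin--Landau. Even with the most favourable bookkeeping (one cotangent per gap, not one per wing as in your block-by-block description, which double counts and loses a further factor $2$), the per-unit cost is at least $2\tau W+\tfrac{2}{\pi\tau}+O(1)$, whose minimum over $\tau$ is $\tfrac{4}{\sqrt{\pi}}W^{1/2}\approx 2.26\,W^{1/2}$. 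That exceeds the claimed $2\sqrt{2/\pi}\,W^{1/2}\approx 1.60\,W^{1/2}$ by a factor $\sqrt{2}$, and no choice of $\tau$, nor any subdivision of the Kusmin--Landau ranges, closes this gap: the statement you are asked to prove is quantitatively out of reach of the cotangent Kusmin--Landau plus trivial-zone balancing. So the ``difficulty in the constants'' you defer is not a matter of careful floors and ceilings; a different main ingredient is required.

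The missing idea is the passage from sums to exponential integrals. The constant $2\sqrt{2/\pi}$ arises as $2\sqrt{2}/\sqrt{\pi\lambda}$ with $\lambda=1/W$ in an explicit second-derivative (stationary-phase/Fresnel) bound for $\bigl|\int_a^b e^{2\pi i g(x)}\,dx\bigr|$ when $g''\ge\lambda$; in Cheng and Graham's Lemma 3 --- which is what this paper actually invokes, its ``proof'' here being only three bookkeeping adjustments to their argument (the count $k\le (L-1)/V+2$ via the mean value theorem, the two one-sided excised intervals of length $W\Delta+1$ at the ends versus $k-2$ of length $2W\Delta+1$, and keeping $2\sqrt{2/\pi}$ instead of rounding to $8/5$) --- the main term on each piece between consecutive integer values of $f'$ comes from such an integral estimate together with a sum-to-integral comparison, only the $2(k-1)$ short excised intervals near the stationary points being bounded trivially. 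Your mention of the bound $\bigl|\int e^{2\pi i g}\bigr|\ll(\min g'')^{-1/2}$ as ``equivalent'' to Kusmin--Landau is the crux of the problem: it is not equivalent, and the factor $\sqrt{2}$ at stake lives precisely in that distinction. To repair the proposal you would need to replace the Kusmin--Landau step by an explicit sum--integral comparison on the ranges where $f'$ is away from integers plus the sharp explicit integral bound, and then redo the interval counting as in the paper's three adjustments.
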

\begin{proof}
This is Lemma 3 in \cite{Cheng} with three slight adjustments. First, when applying the mean-value theorem on the first line of page 1268 of \cite{Cheng} one obtains $k \leq (L-1)/V + 2$ instead of $k\leq L/V + 2$. Second, when estimating the $2(k-1)$ intervals trivially, one may note that there are two intervals of length $W \Delta + 1$, namely those intervals from $(C_{k}- \Delta, C_{k})$ and $(C_{1}, C_{1} + \Delta)$, whereas there are $k-2$ intervals of length $2W \Delta + 1$. Third, we retain the constant $2 \sqrt{2/\pi}$ as opposed to (the only slightly larger) $8/5$.
\end{proof}

\begin{Lem}\label{L2}
Let $f(n)$ be a real-valued function and let $M$ be a positive integer. Then
\begin{equation}\label{5.10}
\bigg| \sum_{n=N+1}^{N+L} e^{2\pi i f(n)}\bigg|^{2} \leq \frac{L(L+M-1)}{M} + \frac{2(L+M-1)}{M} \sum_{m=1}^{M-1} \left( 1 - \frac{m}{M}\right) \max_{K\leq L} \bigg|\sum_{m, K}\bigg|,
\end{equation}
where
\begin{equation*}
\sum_{m, K} = \sum_{n=N+1}^{N+K} e^{2\pi i ( f(n+m) - f(n))}.
\end{equation*}
\end{Lem}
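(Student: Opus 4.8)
This is the classical Weyl–van der Corput inequality (the ``A-process''), so the plan is to follow the standard argument, being careful only with the bookkeeping of the ranges. Write $S = \sum_{n=N+1}^{N+L} e^{2\pi i f(n)}$ and, for notational convenience, set $e(x) = e^{2\pi i x}$ and let $a_n = e(f(n))$ for $N+1 \le n \le N+L$, with $a_n = 0$ otherwise. First I would introduce the shift parameter: for each integer $m$ with $0 \le m \le M-1$ one has $\sum_n a_{n+m} = S$, since the added index only picks up zero terms. Summing over $m$ gives $M S = \sum_m \sum_n a_{n+m}$, and reindexing, $M S = \sum_{n} \big( \sum_{m=0}^{M-1} a_{n+m} \big)$ where $n$ now ranges over the $L+M-1$ integers for which the inner sum can be nonzero.

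Next I would apply Cauchy–Schwarz to this last expression, pairing the constant $1$ against $\sum_{m=0}^{M-1} a_{n+m}$ over the $L+M-1$ relevant values of $n$:
\begin{equation*}
M^2 |S|^2 \le (L+M-1) \sum_{n} \bigg| \sum_{m=0}^{M-1} a_{n+m} \bigg|^2.
\end{equation*}
Expanding the square as $\sum_{m_1=0}^{M-1}\sum_{m_2=0}^{M-1} a_{n+m_1}\overline{a_{n+m_2}}$ and swapping the order of summation, the diagonal terms $m_1 = m_2$ contribute $M \sum_n |a_{n+m_1}|^2 \le M L$ in total (there are $M$ diagonal choices, each contributing at most $L$), so the full diagonal gives at most $M^2 L$. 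For the off-diagonal terms I would set $m = |m_1 - m_2| \ge 1$ and count: each value of $m \in \{1,\dots,M-1\}$ arises from $2(M-m)$ ordered pairs $(m_1,m_2)$, and after a further reindexing the inner sum over $n$ becomes $\sum_{n} a_{n+m}\overline{a_n} = \sum_{n} e\big(f(n+m)-f(n)\big)$ taken over whatever contiguous block of length at most $L$ the nonzero terms occupy — this is bounded by $\max_{K \le L} |\sum_{m,K}|$ in absolute value. Collecting terms yields
\begin{equation*}
M^2 |S|^2 \le (L+M-1)\bigg( M L + 2\sum_{m=1}^{M-1}(M-m)\,\Big|\sum_{m,K}\Big| \bigg),
\end{equation*}
and dividing by $M^2$ and writing $(M-m)/M = 1 - m/M$ gives exactly \eqref{5.10}.

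The argument has no genuine obstacle; the only thing to watch is the counting of how many $n$ contribute at each stage (it is $L+M-1$, not $L$, once the shifts are in play) and the fact that the range of $n$ in $\sum_{m,K}$ need not start at $N+1$ nor have length exactly $L$, which is precisely why the statement takes a maximum over $K \le L$ and over the starting point implicitly. One should also confirm that the bound is vacuous-but-valid when $L = 0$ or when no nonzero off-diagonal sums occur, which is immediate. I would present the proof in the compressed form above, citing \cite[Lemma 5.10]{Titchmarsh} for the underlying inequality and noting that the only change needed is to make the constants explicit — here there is in fact nothing to make explicit, as the inequality already has the sharp numerical shape.
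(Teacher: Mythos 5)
Your proposal is correct, but it takes a different route from the paper: the paper does not re-derive the inequality at all, it simply quotes Lemma 5 of Cheng and Graham and observes that the displayed equation at the bottom of p.~1272 there already permits replacing $L+M$ by $L+M-1$. You instead give a self-contained proof via the standard shift-and-Cauchy--Schwarz (van der Corput $A$-process) argument, and your bookkeeping is the right one: after extending $a_n=e^{2\pi i f(n)}$ by zero, the inner sum $\sum_{m=0}^{M-1}a_{n+m}$ can be nonzero for exactly $L+M-1$ values of $n$, which is precisely where the refined factor $L+M-1$ comes from; the diagonal contributes $ML$, each off-diagonal gap $m$ occurs in $2(M-m)$ ordered pairs, and the resulting sum over $n$ is $\sum_{n=N+1}^{N+L-m}e^{2\pi i(f(n+m)-f(n))}$, i.e.\ $\sum_{m,K}$ with $K=L-m\le L$, so the maximum over $K\le L$ suffices (in fact the range always starts at $N+1$, so your caveat about the starting point is unnecessary, and note that the lemma's $\sum_{m,K}$ is by definition anchored at $N+1$). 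Your final displayed inequality, divided by $M^2$, is exactly \eqref{5.10}. Two small blemishes: the sentence claiming the ``full diagonal gives at most $M^{2}L$'' contradicts your own (correct) display, where the diagonal enters as $ML$; and the closing remark that one need only cite \cite[Lemma 5.10]{Titchmarsh} is slightly off, since Titchmarsh's version has larger constants, lacks the $(1-m/M)$ weight and restricts $M$ --- the whole point of this lemma is that the Cheng--Graham form (with $L+M-1$) is sharper. What your approach buys is transparency: the reader sees exactly why $L+M-1$ is legitimate, rather than having to consult the display in \cite{Cheng}; what the paper's approach buys is brevity.
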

\begin{proof}
This is Lemma 5 in \cite{Cheng} with $L+M$ changed to $L+M-1$, a substitution that is clearly permitted as per the displayed equation at the bottom of \cite[p.~1272]{Cheng}. This differs from Lemma 5.10 in \cite{Titchmarsh} in three respects: there is no upper restriction on $M$, the coefficients are smaller (in \cite{Titchmarsh} both terms in (\ref{5.10}) have 4 as their leading coefficients), and the factor $(1-m/M)$ is present.
\end{proof}

\begin{Lem}\label{HL}
For $t\geq 100$,
\begin{equation}\label{lem1eq}
|\zeta(\tfrac{1}{2} +it)| \leq 2 |\sum_{n\leq \sqrt{\frac{t}{2\pi}}} n^{-\frac{1}{2} - it}| + 1.53 t_{0}^{-\frac{1}{4}} + 3.23 t_{0}^{-\frac{3}{4}}.
\end{equation}
\end{Lem}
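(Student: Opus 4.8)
The plan is to start from an explicit form of the approximate functional equation for $\zeta(s)$ on the critical line, of the shape
\begin{equation*}
\zeta(\tfrac12+it) = \sum_{n\le x} n^{-\frac12-it} + \chi(\tfrac12+it)\sum_{n\le y} n^{-\frac12+it} + E,
\end{equation*}
where $xy = t/(2\pi)$ and $E$ is an explicit error term; I would take $x=y=\sqrt{t/(2\pi)}$, the symmetric choice. The functional-equation factor $\chi(\frac12+it)$ has modulus $1$, so the second sum is the complex conjugate of the first (up to the factor $\chi$), and the two main terms together are bounded by $2|\sum_{n\le\sqrt{t/2\pi}} n^{-1/2-it}|$. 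This is the source of the factor $2$ and of the sum in \eqref{lem1eq}.

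Next I would quantify the error term $E$. There are several standard routes: one can invoke an explicit version of the Riemann--Siegel formula, or an explicit Riemann--Siegel-type remainder as worked out in the literature (e.g.\ the bounds in Titchmarsh's book made explicit, or Lehman-type estimates), or derive the approximate functional equation from scratch via contour integration with explicit control of every integral. The key point is that the error in the symmetric approximate functional equation is $O(t^{-1/4})$, and a careful accounting gives a leading constant times $t^{-1/4}$ together with a secondary term of size $t^{-3/4}$. I would track these two contributions, bound the tail/boundary terms, and show that for $t\ge 100$ (the cheapest threshold at which the various asymptotic estimates used in the error analysis are valid with the stated constants) the total error is at most $1.53\,t^{-1/4} + 3.23\,t^{-3/4}$. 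Since the error is decreasing in $t$, one may replace $t$ by the floor variable $t_0$ on the right (with $t_0$ the largest integer $\le t$, or simply $t_0\le t$), which is how the statement is phrased; this monotonicity step is trivial.

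The main obstacle is the bookkeeping in the error term: getting the constants $1.53$ and $3.23$ requires an explicit Riemann--Siegel remainder (or an explicit approximate functional equation) with all constants pinned down, and then combining the contribution of $\chi(\frac12+it)$ minus its leading approximation, the difference between summing to $\lfloor\sqrt{t/2\pi}\rfloor$ versus $\sqrt{t/2\pi}$, and the genuine remainder integral. Each of these is individually routine but the constants must be chosen consistently and the threshold $t\ge100$ must be large enough to absorb the lower-order terms into the two displayed powers of $t_0$. I would lean on an off-the-shelf explicit Riemann--Siegel bound (such as the one used by Lehman or a sharper modern version) rather than rederiving it, citing it and then doing only the elementary algebra needed to package the result in the form \eqref{lem1eq}. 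No exponential-sum estimates are needed here — those enter only later, when the sum $\sum_{n\le\sqrt{t/2\pi}} n^{-1/2-it}$ is itself estimated via partial summation and Lemmas \ref{CLem} and \ref{L2}.
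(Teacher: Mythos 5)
Your proposal matches the paper's own proof in essence: the authors likewise invoke an off-the-shelf explicit approximate-functional-equation/Riemann--Siegel result (Theorem 1 of Titchmarsh's explicit paper), bound the $\chi$-factor term via $|g(t/2\pi)|\le(\cos\tfrac{\pi}{8})(2\pi)^{1/4}t^{-1/4}$ together with an explicit Stirling bound to get the $t_{0}^{-1/4}$ term, and bound the remainder $R(s)$ (decreasing after division by $t^{3/4}$) to get the $t_{0}^{-3/4}$ term, exactly the bookkeeping you describe. The only caveat is that $t_{0}$ here is the lower cutoff of the range $t\ge t_{0}$ (your ``or simply $t_{0}\le t$'' reading), not a floor function, but that does not affect the argument.
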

\begin{proof}
We use Theorem 1 \cite{Titchmarsh3}, from which it follows that
\begin{equation}\label{cook}
|\zeta(\tfrac{1}{2} + it)| \leq 2 |\sum_{n\leq \sqrt{\frac{t}{2\pi}}} n^{-\frac{1}{2} - it}| + \frac{|\Gamma(\frac{1}{2} + it)|}{2\pi} e^{\frac{1}{2} \pi t} (2\pi)^{\frac{1}{2}} |g(\tfrac{t}{2\pi})| + |R(s)|,
\end{equation}
where, in Titchmarsh's expression for $R(s)$, there appears to be a blemish on the page: the $^{8}$ ought to be $\frac{8}{3}$, as per equation (4.1) of \cite{Titchmarsh3}. By the last line on p.\ 235 of \cite{Titchmarsh3}) we have
\begin{equation}\label{gdef}
|g(\tfrac{t}{2\pi})| \leq (2\pi)^{\frac{1}{4}} t^{-\frac{1}{4}} \bigg| \frac{\cos 2\pi(x^{2} -x - \frac{1}{16})}{\cos 2\pi x}\bigg|,
\end{equation}
where $0\leq x \leq 1$. By (2.6) and (2.7) of \cite{LehmanOld} we have $|g(\frac{t}{2\pi})| \leq (\cos \frac{\pi}{8}) (2\pi)^{\frac{1}{4}} t^{-\frac{1}{4}}$. With the version of Stirling's theorem given in Lemma $\epsilon$ in \cite{Titchmarsh3} we can now bound the second term in (\ref{cook}). Finally, using Titchmarsh's expression for $R(s)$, we note that $R(s)t^{-\frac{3}{4}}$ is decreasing in $t$ provided that $t> (5/2)^{3}$. A computation of the constants involved proves the lemma.
\end{proof}

\section{Proof of Theorem \ref{t1} for large $t$.}\label{sec large}
Write the sum in (\ref{lem1eq}) as $$\sum_{n \leq A_{0} t^{\frac{1}{3}}}n^{-\frac{1}{2} -it} + \sum_{A_{0} t^{\frac{1}{3}} < n \leq \sqrt{\frac{t}{2\pi}}}n^{-\frac{1}{2} -it}$$
provided that the interval of summation in the second sum is non-empty, that is, provided that
\begin{equation}\label{con1}
t_{0} > A_{0}^{6} (2\pi)^{3}.
\end{equation}
The trivial estimate gives
\begin{equation*}\label{par1}
\bigg|\sum_{n\leq A_{0} t^{\frac{1}{3}}} n^{-\frac{1}{2} - it}\bigg| \leq \sum_{n\leq A_{0} t^{\frac{1}{3}}} n^{-\frac{1}{2}} \leq 2 A_{0}^{\frac{1}{2}} t^{\frac{1}{6}} - 1.
\end{equation*}
Now consider 
\begin{equation*}\label{xdef}
X_{j} = A_{0} k^{j} t^{\frac{1}{3}}, 
\end{equation*}
where $k>1$ is a parameter to be determined later, and $j= 0, 1, 2, \ldots, J$, where
\begin{equation*}\label{Jdef}
J \leq \frac{\frac{1}{6} \log t - \log \left(A_{0}(2\pi)^{\frac{1}{2}}\right)}{\log k} + 1.
\end{equation*}
Also, let $N_{j} = [X_{j}]$ be the integer part of $X_{j}$.
It follows that
$$
\sum_{A_{0} t^{\frac{1}{3}} < n \leq \sqrt{\frac{t}{2\pi}}} n^{-\frac{1}{2} - it} = \sum_{j=1}^{J} \sum_{n = N_{j-1} +1}^{\min\{N_{j}, \sqrt{\frac{t}{2\pi}}\}} n^{-\frac{1}{2} -it},
$$
whence, by partial summation we have
\begin{equation}\label{par}
\bigg|\sum_{A_{0} t^{\frac{1}{3}} < n \leq \sqrt{\frac{t}{2\pi}}} n^{-\frac{1}{2} - it}\bigg| \leq \sum_{j=1}^{J} \frac{1}{X_{j-1}^{\frac{1}{2}}} \max_{L \leq N_{j} - N_{j-1}} \bigg| \sum_{n= N_{j-1} +1}^{N_{j-1} + L} e^{-it\log n}\bigg|.
\end{equation}
Denote the sum over $n$ in (\ref{par}) by $S_{j}$. We may estimate $S_{j}$ using Lemmas \ref{CLem} and \ref{L2}. First apply Lemma \ref{L2} to $S_{j}$ and thence apply Lemma \ref{CLem} to the resulting 
\begin{equation*}
\sum_{m, K} = \sum_{n= N_{j-1} +1}^{N_{j-1} + K} e^{-it \left(\log (n+m) - \log n\right)}.
\end{equation*} Choose $M = [k^{j} \theta] + 1$, for some $\theta$ to be determined later. We impose the addition restriction that $M\geq 2$ so as to use the bounds in (\ref{edited}) without worry.

We need to determine $V$ and $W$ in Lemma \ref{CLem}. We have
$$f(x) = -\frac{t}{2\pi}\left( \log (x+m) - \log x\right), \quad |f''(x)| = \frac{tm}{2\pi} \left(\frac{m + 2x}{x^{2}(x+m)^{2}}\right).$$ 
Since $(m+2x)/(x(x+m))^{2}$ is decreasing in both $x$ and $m$ we take $m=0, x=A_{0}k^{j-1} t^{\frac{1}{3}}$, and $m= M-1 \leq k^{j} \theta, x = A_{0} k^{j} t^{\frac{1}{3}}$ to find that $1/W \leq |f''(x)| \leq 1/V$, where
\begin{equation*}\label{VW}
V = \frac{\pi A_{0}^{3} k^{3j}}{k^{3} m}, \quad W = \frac{\pi k^{3j} A_{0}^{3}}{m} \left( 1 + \frac{\theta}{A_{0}t_{0}^{\frac{1}{3}}}\right)^{2}.
\end{equation*}
In order to apply Lemma \ref{CLem} it remains only to note that
\begin{equation}\label{Ldef}
L \leq (k-1) X_{j-1} + 1 \leq (k-1) A_{0} k^{j-1} t^{\frac{1}{3}} +1.
\end{equation}
One may now apply Lemma \ref{CLem} to find that
\begin{equation*}\label{postl2}
|\sum_{m, K}| \leq A_{1}t^{\frac{1}{3}} m^{\frac{1}{2}} k^{-\frac{1}{2} j} + A_{2} t^{\frac{1}{3}} m k^{-2j} + A_{3} m^{-\frac{1}{2}} k^{\frac{3}{2}j} + 3,
\end{equation*}
where
\begin{equation*}\label{Adef}
A_{1} = \frac{2\sqrt{2}(k-1)k^{2}Y_{0}}{\pi A_{0}^{\frac{1}{2}}}, \quad A_{2} = \frac{2(k-1)k^{2}}{\pi A_{0}^{2}},
\quad A_{3} =  2\sqrt{2} A_{0}^{\frac{3}{2}}Y_{0},
\quad Y_{0} =1 + \frac{\theta}{A_{0} t_{0}^{\frac{1}{3}}}.
\end{equation*}
 One of the advantages of using Lemma \ref{CLem} over Lemma 3 in \cite{Cheng} is that, according to (\ref{Ldef}), $L-1$ generates only one term.

The displayed formulae on page 1277 of \cite{Cheng} show that
\begin{equation}\label{edited}
\sum_{1\leq m \leq M -1} \left( 1 - \frac{m}{M}\right)m^{\frac{1}{2}} \leq \frac{4}{15} M^{\frac{3}{2}}, \quad \sum_{1\leq m \leq M -1} \left( 1 - \frac{m}{M}\right)m^{-\frac{1}{2}} \leq \frac{4}{3} M^{\frac{1}{2}}.
\end{equation}
Applying this gives
\begin{equation*}
\frac{1}{M} \sum_{m=1}^{M-1}\left(1 - \frac{m}{M}\right)|\sum_{m, K}| \leq \frac{4}{15}A_{1}t^{\frac{1}{3}} M^{\frac{1}{2}} k^{-\frac{1}{2}j} + \frac{1}{6}A_{2} t^{\frac{1}{3}} M k^{-2j} + \frac{4}{3}A_{3} M^{-\frac{1}{2}} k^{\frac{3}{2}j} + \frac{3}{2}.
\end{equation*}
Return now to Lemma \ref{L2}
\begin{equation*}
|S_{j}|^{2} \leq \frac{L(L+M-1)}{M} + 2(L+M-1)\left(\frac{1}{M} \sum_{m=1}^{M-1}\left(1 - \frac{m}{M}\right)|\sum_{m, K}| \right).
\end{equation*}
For $\alpha>0$, $(L+M-1)M^{\alpha}$ is an increasing function of $M$; $(L+M-1)/M$ is decreasing. We use an upper bound for the numerator and a lower bound for the denominator in $(L+M- 1)/M^{1/2}$. With $M= [k^{j} \theta] +1$ we have, 
\begin{equation*}\label{enterb}
|S_{j}|^{2} \leq B_{1} k^{j} t^{\frac{2}{3}} + B_{2} t^{\frac{2}{3}} + B_{3} k^{j} t^{\frac{1}{3}} + B_{4} k^{2j} t^{\frac{1}{3}},
\end{equation*}
where
\begin{equation}\label{red}
\begin{split}
A_{4} &= \frac{(k-1)^{2} A_{0}^{2}}{k^{2}\theta}\left( 1 + \frac{1}{(k-1) A_{0}  t_{0}^{\frac{1}{3}}}\right) \left( 1  + \frac{\theta k}{(k-1)A_{0}t_{0}^{\frac{1}{3}}}\right)\\
A_{5} &= \frac{2(k-1)A_{0}}{k} \left( 1 + \frac{1}{(k-1)A_{0}t_{0}^{\frac{1}{3}}} + \frac{\theta k}{(k-1) A_{0} t_{0}^{\frac{1}{3}}}\right)\\
A_{6} &= \frac{4}{15} A_{1} \theta^{\frac{1}{2}} \left( 1 + \frac{1}{k\theta}\right)^{\frac{1}{2}}, \quad A_{7}= \frac{A_{2}\theta}{6} \left( 1 + \frac{1}{k\theta}\right)\\
A_{8} &= \frac{4 A_{3}}{3 \theta^{\frac{1}{2}}}, \quad
B_{1} = A_{4} + A_{5} A_{6}, \quad B_{2} = A_{5} A_{7}, \quad B_{3} = \frac{3}{2} A_{5}, \quad B_{4} = A_{5}A_{8}.
\end{split}
\end{equation}
%

Using the inequality $\sqrt{(x+ y + \cdots)} \leq \sqrt{x} + \sqrt{y} + \cdots$ we have
\begin{equation*}
\begin{split}
\sum_{j= 1}^{J} \frac{1}{X_{j-1}^{\frac{1}{2}}} |S_{j}| \leq \frac{k^{\frac{1}{2}}}{A_{0}^{\frac{1}{2}}}\bigg( (&\sqrt{B_{1}} t^{\frac{1}{6}} + \sqrt{B_{3}}) \sum_{j=1}^{J} 1 \\
&+ \sqrt{B_{2}}t^{\frac{1}{6}} \sum_{j=1}^{J} k^{-\frac{1}{2} j} + \sqrt{B_{4}} \sum_{j=1}^{J} k^{\frac{1}{2} j}\bigg).
\end{split}
\end{equation*}
Since
\begin{equation*}
\sum_{j=1}^{J} k^{-\frac{1}{2} j} = k^{-\frac{1}{2}} \left( \frac{1 - k^{-\frac{1}{2}J}}{1 - k^{-\frac{1}{2}}}\right), 
\end{equation*}
this gives
\begin{equation*}
\sum_{j=1}^{J} \frac{1}{X_{j-1}^{\frac{1}{2}}} |S_{j}| \leq \left( \frac{k}{A_{0}}\right)^{\frac{1}{2}}(C_{1} t^{\frac{1}{6}} \log t + C_{2} t^{\frac{1}{6}} + C_{3} t^{\frac{1}{12}} + C_{4} \log t + C_{5}),
\end{equation*}
where
\begin{equation*}
\begin{split}
C_{1} &= \frac{\sqrt{B_{1}}}{6 \log k}, \quad C_{2} = \sqrt{B_{1}}\left( 1  -  \frac{\log \left(A_{0} (2\pi)^{\frac{1}{2}}\right)}{\log k}\right) + \frac{\sqrt{B_{2}} k^{-\frac{1}{2}}}{1 - k^{-\frac{1}{2}}}\\
C_{3} &=\frac{\sqrt{B_{4}} k}{A_{0}^{\frac{1}{2}} (2\pi)^{\frac{1}{4}}(k^{\frac{1}{2}} -1)} - \frac{\sqrt{B_{2}} A_{0}^{\frac{1}{2}} (2\pi)^{\frac{1}{4}}}{k(1-k^{-\frac{1}{2}})}, \quad C_{4} = \frac{\sqrt{B_{3}}}{6 \log k}\\
C_{5} &= \sqrt{B_{3}}\left( 1 -  \frac{\left(\log A_{0} (2\pi)^{\frac{1}{2}}\right)}{\log k}\right) - \frac{\sqrt{B_{4}} k^{\frac{1}{2}}}{k^{\frac{1}{2}} - 1}.
\end{split}
\end{equation*}
This means that
\begin{equation*}
|\zeta(\tfrac{1}{2} + it)| \leq D_{1} t^{\frac{1}{6}} \log t + D_{2} t^{\frac{1}{6}} + D_{3} t^{\frac{1}{12}} + D_{4} \log t +D_{5},
\end{equation*}
where
\begin{equation}\label{morse}
\begin{split}
D_{1} &= 2 C_{1}\left( \frac{k}{A_{0}}\right)^{\frac{1}{2}}, \quad D_{2} = 2\left( 2A_{0}^{\frac{1}{2}} + C_{2}\left( \frac{k}{A_{0}}\right)^{\frac{1}{2}} \right), \quad
D_{3} = 2C_{3}\left( \frac{k}{A_{0}}\right)^{\frac{1}{2}},\\
D_{4} &= 2C_{4}\left( \frac{k}{A_{0}}\right)^{\frac{1}{2}},\quad
D_{5} = 2 \left( C_{5}\left( \frac{k}{A_{0}}\right)^{\frac{1}{2}} - 1 + \frac{0.77}{t_{0}^{\frac{1}{4}}} + \frac{1.62}{t_{0}^{\frac{3}{4}}}\right).
\end{split}
\end{equation}
To reduce the right side of (\ref{morse}) as much as possible it is desirable to choose a large value of $t_{0}$. We shall, in the next section, use (\ref{lehm}) to handle smaller values of $t$. With this in mind, the choice
\begin{equation*}
k= 1.16, \quad \theta = 7.5, \quad A_{0} = 3.37, \quad t_{0} = 5.867\times 10^{9}
\end{equation*}
means that $|\zeta(\frac{1}{2} + it)|\leq 0.732 t^{\frac{1}{6}}\log t$ for $t\geq t_{0}$, (\ref{con1}) is satisfied, and that $M\geq 2$. We now turn our attention to $t< 5.867\times 10^{9}$.


\section{Proof of Theorem \ref{t1} for small $t$}\label{sec small}

\begin{Lem}\label{lem:comp1}
For $t\in[2,5.867\times 10^9]$ we have
\begin{equation*}
\left|\zeta\left(\tfrac{1}{2}+it\right)\right|<0.732 t^{\frac{1}{6}}\log t.
\end{equation*}
\begin{proof}
The trivial bound (\ref{lehm}) is tighter than our new bound at $t=5.867\times 10^9$  and remains so for $t$ all the way down to $t=226.7088\ldots$. We checked the range $[2,230]$ rigorously by computer as follows.

We implemented an interval arithmetic version of the Euler--MacLaurin summation formula that, given an interval $\underline{t}$ returns an interval that includes $|\zeta(\frac{1}{2}+it)|$ for all $t\in\underline{t}$. We divided the line segment $[2,230]$ into pieces of length $1/1024$ and for each piece, checked that $|\zeta(\frac{1}{2}+it)|$ did not exceed our bound. Specifically, if we are considering $\underline{t}=[a,a+1/1024]$ and we know that for $t\in\underline{t}$ that $|\zeta(\frac{1}{2}+it)|\in[x,y]$, then we check $y<0.732 a^{\frac{1}{6}} \log a$. No counter examples exist for $t\in[2,230]$ and this establishes the lemma.
\end{proof}
\end{Lem}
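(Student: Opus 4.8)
The plan is to split $[2,\,5.867\times 10^{9}]$ into two overlapping subranges and treat each by a different method. On the upper subrange I would not compute $\zeta$ at all but simply invoke the classical bound (\ref{lehm}): the inequality $\tfrac{4}{(2\pi)^{1/4}}t^{1/4}<0.732\,t^{1/6}\log t$ is equivalent to
\begin{equation*}
\frac{t^{1/12}}{\log t}<\frac{0.732\,(2\pi)^{1/4}}{4}=0.2897\ldots .
\end{equation*}
The function $t\mapsto t^{1/12}/\log t$ has a single stationary point, a minimum at $t=e^{12}$, so on any interval its maximum is attained at an endpoint; a short computation shows equality in the displayed inequality occurs near $t=226.7088$ and that the inequality holds at both $t=226.71$ and $t=5.867\times 10^{9}$, hence throughout $[226.71,\,5.867\times 10^{9}]$. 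Thus (\ref{lehm}) already gives the lemma for $t\ge 226.71$.

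It remains to handle $t\in[2,230]$, which I would do by a rigorous numerical evaluation of $|\zeta(\tfrac{1}{2}+it)|$. The tool is the Euler--Maclaurin formula
\begin{equation*}
\zeta(s)=\sum_{n=1}^{N-1}n^{-s}+\frac{N^{1-s}}{s-1}+\frac{1}{2}N^{-s}+\sum_{k=1}^{K}\frac{B_{2k}}{(2k)!}(s)_{2k-1}N^{-s-2k+1}+E_{N,K}(s),
\end{equation*}
together with the standard explicit bound on the tail $E_{N,K}(s)$, implemented in interval arithmetic so that feeding in an interval $\underline t$ returns an interval guaranteed to contain $|\zeta(\tfrac{1}{2}+it)|$ for every $t\in\underline t$. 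I would partition $[2,230]$ into subintervals $\underline t=[a,a+2^{-10}]$; on each one, compute an enclosure $|\zeta(\tfrac{1}{2}+it)|\in[x,y]$ and verify the single inequality $y<0.732\,a^{1/6}\log a$, using the left endpoint $a$ on the right-hand side because $t\mapsto 0.732\,t^{1/6}\log t$ is increasing on $[2,230]$. If this succeeds on every subinterval, the lemma holds on $[2,230]$, and combined with the preceding paragraph on all of $[2,\,5.867\times 10^{9}]$.

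The delicate point is the left end of the range: near $t=2$ the target $0.732\,t^{1/6}\log t$ is only about $0.57$, which is of the same order as $|\zeta(\tfrac{1}{2}+it)|$ itself, so the margin is thin and the computation must deliver genuinely tight enclosures there — this is why a step as small as $2^{-10}$ (with correspondingly large $N$ and $K$ in Euler--Maclaurin) is needed and why one wants a verified-arithmetic implementation rather than a floating-point one. Everything else — choosing $N$, $K$ to make $E_{N,K}$ negligible, and the elementary calculus behind the monotonicity claims of the first paragraph — is routine.
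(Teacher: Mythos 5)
Your proposal is correct and follows essentially the same route as the paper: the bound (\ref{lehm}) disposes of everything down to the crossover near $t=226.7088\ldots$, and an interval-arithmetic Euler--Maclaurin verification on $[2,230]$ in steps of $1/1024$, comparing each enclosure's upper bound against $0.732\,a^{1/6}\log a$ at the left endpoint, handles the rest. Your explicit monotonicity analysis of $t^{1/12}/\log t$ (minimum at $t=e^{12}$, so check the endpoints) merely spells out what the paper leaves implicit.
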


\begin{cor}
For $t$ real and $Q\geq 4.678$ we have 
\begin{equation*}
\left|\zeta\left(\tfrac{1}{2}+it\right)\right|<0.732 |Q+it|^{\frac{1}{6}}\log |Q+it|.
\end{equation*}
\begin{proof}
For $|t|\geq 2$ we use Lemma \ref{lem:comp1}. For $t\in(-2,2)$ we know that $|\zeta(\frac{1}{2}+it)|$ attains a maximum at $t=0$ so we determine a $Q$ such that
\begin{equation*}
|\zeta(\tfrac{1}{2})|<0.732 Q^{\frac{1}{6}}\log Q
\end{equation*}
and we are done.
\end{proof}
\end{cor}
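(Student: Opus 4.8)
The plan is to reduce the corollary to two ingredients that are already available: Theorem~\ref{t1}, which disposes of $|t|\ge 2$, and a single rigorous numerical evaluation of $\zeta(\tfrac12)$, which disposes of $|t|<2$. Throughout write $\phi(x)=x^{1/6}\log x$; then $\phi'(x)=x^{-5/6}\bigl(\tfrac16\log x+1\bigr)>0$ for $x>e^{-6}$, so $\phi$ is strictly increasing on $[1,\infty)$. Since $|Q+it|=\sqrt{Q^{2}+t^{2}}$, we have $|Q+it|\ge\max(Q,|t|)$, with $|Q+it|>|t|$ whenever $Q>0$ and $|Q+it|>Q$ whenever $t\ne 0$.

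For the range $|t|\ge 2$ I would argue as follows. The functional equation gives $|\zeta(\tfrac12+it)|=|\zeta(\tfrac12-it)|$, so Theorem~\ref{t1} yields $|\zeta(\tfrac12+it)|\le 0.732\,\phi(|t|)$. Because $Q\ge 4.678>0$ we have $|Q+it|>|t|\ge 2>e^{-6}$, hence $\phi(|t|)<\phi(|Q+it|)$, and therefore $|\zeta(\tfrac12+it)|<0.732\,\phi(|Q+it|)$, which is exactly the asserted bound.

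For the range $|t|<2$ I would first note that the right-hand side is monotone in $Q$: since $|Q+it|\ge Q\ge 4.678>e^{-6}$, monotonicity of $\phi$ gives $0.732\,\phi(|Q+it|)\ge 0.732\,\phi(Q)\ge 0.732\,\phi(4.678)$. So it suffices to show $\sup_{|t|<2}|\zeta(\tfrac12+it)|<0.732\,\phi(4.678)$. One checks (from the computed behaviour of $|\zeta(\tfrac12+it)|$ on $[-2,2]$, or directly with the interval-arithmetic routine of Lemma~\ref{lem:comp1}) that this supremum is attained at $t=0$, so the claim reduces to the single inequality $|\zeta(\tfrac12)|<0.732\cdot 4.678^{1/6}\log 4.678$. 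Evaluating both sides rigorously — roughly $|\zeta(\tfrac12)|=1.46035\ldots$ against $0.732\cdot 4.678^{1/6}\log 4.678=1.46051\ldots$ — confirms it; running this comparison backwards is precisely what determines the constant $4.678$.

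The one delicate point is this final inequality, whose two sides agree to three decimal places: the numerical check must therefore carry genuine error control on $\zeta(\tfrac12)$ and on $4.678^{1/6}\log 4.678$, and there is essentially no room to round $4.678$ up to a tidier value. Everything else — the monotonicity of $\phi$, the inequalities $|Q+it|\ge\max(Q,|t|)$, and the appeal to Theorem~\ref{t1} — is routine.
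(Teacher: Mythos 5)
Your proposal is correct and takes essentially the same route as the paper: split into $|t|\ge 2$ (handled by the bound of Theorem~\ref{t1}/Lemma~\ref{lem:comp1} together with monotonicity of $x^{1/6}\log x$ and $|Q+it|\ge |t|$) and $|t|<2$ (handled by the maximum at $t=0$ and the single numerical comparison $|\zeta(\tfrac12)|<0.732\cdot 4.678^{1/6}\log 4.678$). You merely make explicit the monotonicity, the symmetry $|\zeta(\tfrac12+it)|=|\zeta(\tfrac12-it)|$, and the tightness of the numerical margin, all of which the paper leaves implicit.
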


\section{Conclusion}\label{secconc}
Since an Euler--MacLaurin computation of $\zeta(\frac{1}{2}+it)$ becomes inefficient as $t$ increases, we also implemented an interval version of the Riemann--Siegel formula (R-S) for $t\geq 200$. Above this height we have explicit error bounds due to Gabcke \cite{Gabcke}. The only nuance is that the main sum of R-S runs from $1$ to $\lfloor \sqrt{t/2\pi} \rfloor$ and we must be careful not to compute with intervals $\underline{t}=[a,b]$ such that $\lfloor \sqrt{a/2\pi}\rfloor \neq \lfloor\sqrt{b/2\pi}\rfloor$. We get around this by using Euler--MacLaurin for such intervals.

So armed, we can continue to compute $|\zeta(\frac{1}{2}+it)|$ for $t\in[a,b]$ and each time we come across an interval where (possibly) $|\zeta(\frac{1}{2}+i[a,b])|$ sets a new record $[x,y]$, we store $a$ and $y$. Running through the data files produced, it is a trivial matter to find an $A$ such that $|\zeta(\frac{1}{2}+it)|<A t^{\frac{1}{6}}\log t$ throughout the range. Our results are summarised in Table \ref{tab:abounds}.

\begin{table}[ht]
\caption{Bounds on $|\zeta(\frac{1}{2} +it)| \leq A t^{\frac{1}{6}}\log t$ for ranges of $t$.}
\centering
\begin{tabular}
{c c c c c c}
\hline\hline
$\underline{t}$& $A$\\[0.5ex]\hline
$[2,200]$&$0.7090$\\
$[200,10^3]$ & $0.4873$\\
$[10^3,10^4]$ & $0.4682$\\
$[10^4,10^5]$ &  $0.4217$\\
$[10^5,10^6]$ & $0.3765$\\
$[10^6,10^7]$ & $0.3238$\\
$[10^7,10^8]$ & $0.2854$\\
    \hline\hline
  \end{tabular}
\label{tab:abounds}
\end{table}
It seems that the bound in Theorem \ref{t1} is still very far from optimal.

\bibliographystyle{plain}
\bibliography{themastercanada}

\end{document}